\pgfplotsset{compat=1.14}
\newtheorem{thm}{Theorem}%[section]
\newtheorem{claim}{Claim}
\newtheorem{lemma}[thm]{Lemma}
\newtheorem{observation}[thm]{Observation}
\newcommand{\thistheoremname}{}
\newtheorem*{genericthm*}{\thistheoremname}
\newenvironment{namedthm*}[1]
{\renewcommand{\thistheoremname}{#1}%
	\begin{genericthm*}}
	{\end{genericthm*}}
\newcommand{\abs}[1]{\left\lvert{#1}\right\rvert}
\title{A note on maximum size of Berge-$C_4$-free hypergraphs}
\author
{Beka Ergemlidze
\thanks{Department of Mathematics and Statistics, University of South Florida,
Tampa, Florida 33620, USA.     	E-mail: \texttt{beka.ergemlidze@gmail.com}} \qquad 

	}
\begin{document}

\maketitle

\begin{abstract}
In this paper, we consider maximum possible value for the sum of cardinalities of hyperedges of a hypergraph without a Berge $4$-cycle. We significantly improve the previous upper bound provided by Gerbner and Palmer. Furthermore, we provide a construction that slightly improves the previous lower bound.
\end{abstract}

\section{Introduction}
 
A Berge cycle of length $k$, denoted by Berge-$C_k$, is an alternating sequence of distinct vertices and distinct hyperedges of the form $v_1,h_1,v_2,h_2,\ldots v_k,h_k$ where $v_i,v_{i+1}\in h_i$ for each $i\in\{1,2,\ldots ,k-1\}$ and $v_kv_1\in h_k$.

 Throughout the paper we allow hypergraphs to include multiple copies of the same hyperedge
(multi-hyperedges).

Let $\mathcal H$ be a Berge-$C_4$-free hypergaph on $n$ vertices, Gy\H{o}ri and Lemons \cite{gyori-lemons} showed that 
$\sum_{h\in \mathcal H}(\abs{h}-3)\leq (1+o(1))12\sqrt{2} n^{3/2}$. Notice that it is natural to take $\abs{h}-3$ in the sum, otherwise we could have arbitrarily many copies of a $3$-vertex hyperedge. In \cite{gebner-palmer} Gerbner and Palmer improved the upper bound proving that  $\sum_{h\in \mathcal H}(\abs{h}-3)\leq \frac{\sqrt{6}}2 n^{3/2}+O(n)$, furthermore they showed that there exists a Berge-$C_4$-free hypergraph $\mathcal H$ such that $\sum_{h\in \mathcal H}(\abs{h}-3)\geq (1+o(1))\frac1{3\sqrt{3}} n^{3/2}.$

In this paper we improve their bounds.
\begin{thm} \label {mainthm}
Let $\mathcal H$ be a Berge $C_4$-free hypergaph on $n$ vertices, then
$$\sum_{h\in \mathcal H}(\abs{h}-3)\leq (1+o(1))\frac12 n^{3/2}.$$

Furthermore, there exists a $C_4$-free hypergraph $\mathcal H$ such that
$$(1+o(1))\frac{1}{2{\sqrt6}}n^{3/2}\leq \sum_{h\in \mathcal H}(\abs{h}-3)$$
\end{thm}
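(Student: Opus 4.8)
The plan is to prove the two bounds separately, anchoring the upper bound to the extremal number $\mathrm{ex}(n,C_4)=(1+o(1))\frac12 n^{3/2}$ of the K\H{o}v\'ari--S\'os--Tur\'an/Reiman theorem, which is exactly the constant sought. First I would discard the negligible part: hyperedges with $\abs h\le 3$ contribute nothing to $\sum_h(\abs h-3)$, and I would check that the repetitions forced by very high-codegree pairs (which require many vertices) are absorbed into the $o(n^{3/2})$ error. The model sub-case is where one selects a single pair $e_h\subseteq h$ per hyperedge: letting $G$ be the simple graph of the chosen pairs, any $4$-cycle $abcd$ of $G$ lifts to a Berge-$C_4$, because each hyperedge is charged only one pair, so the four edges $ab,bc,cd,da$ are realized by four \emph{distinct} hyperedges (a hyperedge realizing two of them would have been charged two pairs). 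Hence $G$ is genuinely $C_4$-free and Reiman's bound yields $\frac12 n^{3/2}$; this already controls the number of hyperedges and settles the $4$-uniform case.

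The hard part will be large hyperedges, where recovering $\sum_h(\abs h-3)$ means charging $\abs h-3$ pairs to one hyperedge, and the lifting above breaks: two pairs taken from a single $h$ can occur as two edges of a $4$-cycle of $G$ whose remaining two edges come from two further hyperedges, giving only three distinct hyperedges, i.e.\ a genuine $C_4$ of $G$ that is \emph{not} a Berge-$C_4$ (equivalently, a Berge triangle carrying an extra apex). I would first record that the naive remedy of splitting a size-$k$ hyperedge into $k-3$ $4$-sets sharing a common triple fails, since one large hyperedge can impersonate several edges of a Berge-$C_4$. Instead I would pass to a double count of Berge paths of length two, writing a ``cherry'' as a pair of incidences $(u,h),(u,h')$ at a common centre $u$ with $h\ne h'$, and use the only genuinely available fact: Berge-$C_4$-freeness forbids two cherries with distinct centres and \emph{disjoint} hyperedge-pairs that span the same endpoint pair $\{x,y\}$. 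The delicate point---and the real obstacle to a clean constant---is precisely this edge-and-vertex disjointness, which distinguishes a Berge-$C_4$ from an ordinary $C_4$; I would handle it by segregating the cherries whose two hyperedges meet beyond their centre (these organise into Berge triangles and ``books'', whose total weight I expect to be $o(n^{3/2})$) from the ``independent'' ones, for which the count reduces to the graph case. Feeding the resulting inequality $\sum_u\binom{d_u}{2}\lesssim\binom n2$ (with $d_u$ the number of hyperedges through $u$) into convexity, together with the saving from the $-3$, should give the constant $\frac12$; verifying that the discarded interactions are genuinely lower-order is where most of the work lies.

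For the construction I would start from a near-extremal $C_4$-free host---a polarity (Erd\H{o}s--R\'enyi) graph or an explicit algebraic incidence structure over $\mathbb F_q$---and build a hypergraph of bounded uniformity $k$ on it, arranging the hyperedges so that the Berge two-paths through any fixed pair are forced to share a vertex or an edge, which rules out the disjoint pair of two-paths needed for a Berge-$C_4$. Freeness I would verify by projecting any hypothetical Berge-$C_4$ back to a forbidden short configuration (a $C_4$, or a degenerate closed walk excluded by the incidence/girth structure) in the host. With $\Theta(n^{3/2})$ hyperedges each of constant size $k$, the sum is $(1+o(1))(k-3)m$ with $m$ the number of hyperedges, and optimising the single free parameter $k$ against the edge-count constraint imposed by freeness---the optimum sitting at a small constant near $k=6$---produces the coefficient $\frac1{2\sqrt6}$, improving the previous $\frac1{3\sqrt3}$. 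The main difficulty here mirrors the upper bound: enlarging the hyperedges increases each term $\abs h-3$ but, to remain Berge-$C_4$-free, forces fewer edges, and pinning down the exact trade-off that yields $\frac1{2\sqrt6}$ rather than a weaker constant is the crux of the construction.
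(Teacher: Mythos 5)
Your upper-bound plan has a genuine gap at exactly the step you defer. The one-pair-per-hyperedge lifting is fine but only controls the number of hyperedges; for the full sum you switch to cherry counting, and the decisive claim---that cherries whose two hyperedges meet beyond the centre ``organise into Berge triangles and books'' of total weight $o(n^{3/2})$, so that the per-pair multiplicity becomes essentially $1$ and $\sum_u\binom{d_u}{2}\lesssim\binom{n}{2}$ follows---is not only unproved but false as a global statement. A sunflower $h_i=\{x,u\}\cup A_i$ with the $A_i$ pairwise disjoint is Berge-$C_4$-free, yet its degenerate cherries at centre $u$ span $\Theta(n^2)$ endpoint pairs; worse, a star $h=\{x,u_1,\dots,u_k\}$ together with otherwise-disjoint $h_i\ni u_i,y$ is Berge-$C_4$-free while the single pair $\{x,y\}$ is spanned by $k$ cherries (their hyperedge pairs pairwise intersect in $h$, evading your forbidden configuration), so there is no bounded per-pair multiplicity at all. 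Even granting a per-pair bound by some constant $C>1$, convexity would give the constant $\sqrt{C}/2$, not $1/2$: obtaining $1/2$ requires showing the coincidences are \emph{lower order}, which is precisely the paper's hard step. Separately, your count through $d_u$ bounds the incidence sum $\sum_h\abs{h}$, which can genuinely reach $n^{3/2}$ (a $C_4$-free graph viewed as a $2$-uniform hypergraph), so ``the saving from the $-3$'' cannot be fed into this count without a concrete device. The paper supplies both missing mechanisms: it embeds $\abs{h}-3$ edges (vertex-disjoint triangles and single edges) into each hyperedge, producing a colored graph $H$ with $\abs{H}=\sum_h(\abs{h}-3)$ that is $K_{2,7}$-free, and then, for each vertex $v$, proves the auxiliary graph recording second-neighborhood coincidences on $N_1(v)$ is $K_{5,5}$-free via a directed-graph case analysis of two forbidden patterns forced by Berge-$C_4$-freeness; this caps the coincidences at $O(d(v)^{9/5})$, i.e.\ locally lower order, and the Blakley--Roy two-walk count then yields $d\le(1+o(1))\sqrt{n}$. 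Nothing in your sketch substitutes for the $K_{5,5}$ lemma.

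The lower bound is also not established: no construction is actually exhibited, and the hosts you name would fail under the natural implementation. The paper's construction replaces each vertex of a bipartite $C_4$-free graph on $n/3$ vertices by $3$ identical copies, turning each edge into a $6$-set; bipartiteness (triangle-freeness) of the host is essential, since a host triangle blows up to a Berge-$C_4$ (its three hyperedges plus any fourth hyperedge through two copies of one vertex), so a polarity graph, which contains many triangles, is unusable here despite its better edge count. Moreover the optimization is subtler than tuning $k$: the naive density trade-off $(k-3)(n/k)^{3/2}$ actually prefers $k$ near $9$; what caps the uniformity at $6$ is that a $4$-fold blow-up already creates a Berge-$C_4$ among four copies of a single vertex lying in four hyperedges. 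So your ``optimum near $k=6$'' is the right answer, but for a structural reason your sketch does not supply, and the verification that the $3$-fold blow-up is Berge-$C_4$-free (projecting a hypothetical cycle to a $C_4$, a $C_3$, or a repeated hyperedge) still needs to be carried out.
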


This improves the upper-bound by factor of $\sqrt{6}$ and slightly increases the lower-bound.

%\subsection*{Notation}
%	\label {notation}

We introduce couple of important notations and definitions used throughout the paper.  %When we say a path $v_0v_1\ldots v_k$ it means 
%\begin{itemize}

     Length of a path is the number of edges in the path. %We usually denote a path $v_0, v_1, \ldots, v_k$, simply as $v_0 v_1 \ldots v_k$.

    For convenience, an edge or a pair of vertices $\{a,b\}$ is sometimes referred to as $ab$. 
    
     For a graph (or a hypergraph) $H$ , for convenience, we sometimes use $H$ to denote the edge set of the graph (hypergraph) $H$. Thus the number of edges (hyperedges) in $H$ is $\abs{H}$.
    
    %\item Given a graph $G$ and a subset of its vertices $S$, let the subgraph of $G$ induced by $S$ be denoted by $G[S]$. 
    
    %\item  For a hypergraph $H$, let $\partial H = \{ab \mid ab \subset e \in E(H)\}$ denote its \emph{2-shadow} graph. 

    %\item For a hypergraph $H$ and a pair of vertices $u, v \in V(H)$, let $\codeg(v,u)$ denote the number of hyperedges of $H$ containing the pair $\{u,v\}$.
%\end{itemize}

\section{Proof of Theorem \ref{mainthm}}

We will now construct a graph, existence of which is proved in \cite {gebner-palmer} (page 10).
Let us take a graph $H$ on a ground set of $\mathcal H$ by embedding edges into each hyperedge of $\mathcal H$. More specifically, for each $h\in \mathcal H$ we embed $\abs{h}-3$ edges on the vertices of $h$, such that collection of edges that were embedded in $h$ consists of pairwise vertex-disjoint triangles and edges.
We say that $e\in H$ has color $h$ if $e$ was embedded in the hyperedge $h$ of the hyergraph $\mathcal H$.
We will upper bound the number of edges in $H$, which directly gives us an upper bound on $\sum_{h\in \mathcal H}(\abs{h}-3)$.
\begin{observation}
\label{obs}
For each vertex $x$ of the graph $H$, at most $2$ adjacent edges to $x$ have the same color. Moreover, if $xy$ and $xz$ have the same color $h$, then $yz\in H$ and the color of $yz$ is $h$ as well.
\end{observation}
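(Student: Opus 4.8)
The plan is to derive both claims directly from the defining property of the construction, namely that for a fixed color $h$ the edges embedded in the hyperedge $h$ form a collection of pairwise vertex-disjoint triangles and single edges on the vertex set of $h$. Neither part of the observation requires the Berge-$C_4$-free hypothesis; both follow purely from this local structure of each color class, so the proof amounts to translating the geometric description of the embedded edges into statements about degrees.

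First I would fix a color $h$ and consider the subgraph $H_h$ of $H$ consisting of all edges of color $h$. By the construction, $H_h$ is a vertex-disjoint union of triangles and edges. Consequently every vertex has degree at most $2$ in $H_h$: a vertex lying in a triangle component has degree exactly $2$, a vertex lying in a single-edge component has degree exactly $1$, and any vertex in no component of $H_h$ has degree $0$. This degree bound is precisely the assertion that at most two edges of color $h$ are incident to $x$, which establishes the first part (applied for each color separately).

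For the second part, suppose $xy$ and $xz$ are both of color $h$. Then $x$ has degree $2$ in $H_h$, so by the classification above $x$ cannot lie in a single-edge component and must lie in a triangle component $T$ of $H_h$. Since $y$ and $z$ are the two neighbours of $x$ inside $H_h$, they must be exactly the other two vertices of $T$, so $T$ is the triangle on $\{x,y,z\}$. In particular $yz$ is one of the three edges of $T$, hence $yz\in H$ and its color is $h$, as claimed.

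The only point requiring any care is the faithful translation of the phrase \emph{pairwise vertex-disjoint triangles and edges} into the degree bound and the subsequent identification of the triangle through $x$; once that is made explicit there is no real obstacle, since the statement is an immediate structural consequence of how $H$ is built rather than of any extremal property of $\mathcal H$.
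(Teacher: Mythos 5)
Your proof is correct and matches the paper's intended justification: the paper states this as an observation without proof, precisely because it follows immediately from the construction in the way you describe (each color class is a vertex-disjoint union of triangles and single edges, giving degree at most $2$ within a color class, and two same-colored edges at $x$ force a triangle component containing $x$, $y$, $z$). Your write-up simply makes explicit the argument the paper leaves implicit, with no gaps.
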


%THIS IS POSSIBLE BECAUSE OF THE GEBNER PALMER PAPER.

The following lemma is stated and proved in \cite{gebner-palmer}(claim $16$, page 10).
\begin{lemma}
\label{nok27}
$H$ is $K_{2,7}$-free.
\end{lemma}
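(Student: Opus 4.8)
The plan is to show that if $H$ contained a copy of $K_{2,7}$, then the hypergraph $\mathcal{H}$ would contain a Berge-$C_4$, contradicting our hypothesis. Suppose for contradiction that $H$ has two vertices $a,b$ each adjacent to seven common vertices $c_1,\ldots,c_7$. Each of the fourteen edges $ac_i$ and $bc_i$ carries a color, which is a hyperedge of $\mathcal{H}$. The key structural fact I would exploit is Observation \ref{obs}: around the fixed vertex $a$, at most two of the edges $ac_i$ can share a color, and similarly around $b$. So the colors appearing on the edges $ac_1,\ldots,ac_7$ come from a palette where each color is used at most twice, and likewise for the edges $bc_1,\ldots,bc_7$.

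First I would argue that after discarding a bounded number of the $c_i$, one can select four indices, say $i,j,k,\ell$, so that the four edges $ac_i, ac_j$ (at $a$) and the four edges $bc_k,bc_\ell$ (at $b$) are pairwise distinctly colored in a suitable sense --- concretely, I want to find two indices $i\neq j$ such that $ac_i,ac_j,bc_i,bc_j$ all receive \emph{distinct} colors $h_1,h_2,h_3,h_4$. Since each color can appear on at most two edges incident to $a$ and at most two incident to $b$, a color repeating between the $a$-star and the $b$-star is constrained; a pigeonhole/counting argument over seven common neighbors should force the existence of such a pair $i,j$ with four genuinely distinct hyperedges. Once I have distinct hyperedges $h_1\ni a,c_i$, $h_2\ni a,c_j$, $h_3\ni b,c_i$, $h_4\ni b,c_j$, the sequence
$$a,\,h_1,\,c_i,\,h_3,\,b,\,h_4,\,c_j,\,h_2,\,a$$
is a Berge-$C_4$ on the vertices $a,c_i,b,c_j$, since consecutive vertices lie in the intervening hyperedge and all four hyperedges are distinct.

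The main obstacle is ruling out the collisions that prevent the four hyperedges from being distinct. Two issues arise. One is that a single hyperedge might contain both $a$ and $b$, or contain three of the four chosen vertices, which would let one color serve two of the required edges and collapse the cycle; I would need to track how a color that appears at $a$ can also appear at $b$ (this happens exactly when one hyperedge contains both, and by the vertex-disjoint-triangle structure of the embedding this is tightly limited). The other is the ``moreover'' part of Observation \ref{obs}: if $ac_i$ and $ac_j$ share a color $h$, then $c_ic_j$ is also present with color $h$, which gives extra triangles I must avoid reusing. I expect that the number of common neighbors that must be sacrificed to dodge all such coincidences is at most six, which is precisely why the threshold $K_{2,7}$ (rather than $K_{2,3}$) appears: seven common neighbors leave enough room to extract a clean pair $i,j$ yielding four distinct hyperedges and hence the forbidden Berge-$C_4$.
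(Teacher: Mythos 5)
Your reduction is the natural one --- four distinct hyperedges on the four edges of a $K_{2,2}$ inside the $K_{2,7}$ do give a Berge-$C_4$ --- but the central pigeonhole claim, that seven common neighbors force a pair $i\neq j$ with $ac_i,ac_j,bc_i,bc_j$ receiving four distinct colors, is false. Write $\alpha_i,\beta_i$ for the colors of $ac_i,bc_i$. Take three hyperedges $h_1,h_2,h_3$, each of size $9$, with embeddings consisting of two disjoint triangles: $ac_1c_2$ and $bc_3c_5$ in $h_1$; $ac_3c_4$ and $bc_1c_6$ in $h_2$; $ac_5c_6$ and $bc_2c_4$ in $h_3$. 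This hypergraph has only three hyperedges, so it is trivially Berge-$C_4$-free, the embedding is exactly of the form in the paper, Observation \ref{obs} holds, and $H$ contains a $K_{2,6}$ with centers $a,b$ in which the pairs $(\alpha_i,\beta_i)$ run over all six ordered pairs of distinct colors from $\{h_1,h_2,h_3\}$; since any two such ordered pairs share a color, \emph{every} pair $i\neq j$ has a coincidence and no ``good'' pair exists. (This also shows $K_{2,6}$ is genuinely realizable, so the threshold $7$ is tight and no slack of ``six sacrificed neighbors'' is available.) Now adjoin a seventh neighbor with $\alpha_7=\beta_7=h^*$, where $h^*$ embeds the triangle $abc_7$: all the local constraints you invoke still hold, each color still appears at most twice at $a$ and at most twice at $b$, and still no pair $(i,j)$ yields four distinct colors, because the degenerate index $7$ collides with everything. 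A counting along your lines only succeeds when all seven indices satisfy $\alpha_i\neq\beta_i$; after discarding the (at most one) degenerate index you have exactly six left, and the configuration above shows six can be collision-full.

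The seven-neighbor configuration just described does contain a Berge-$C_4$ --- for instance $a,h^*,b,h_1,c_1,h_2,c_6,h_3$, which uses the containments $c_1\in h_2$ and $c_6\in h_3$ forced by the repeated colors, and the pair $a,b\in h^*$ --- but this cycle has a different shape from your $a,h_1,c_i,h_3,b,h_4,c_j,h_2$, and extracting such cycles from the collision patterns is precisely the case analysis your sketch omits. A second unaddressed point: you never use that $H$ is simple (no pair embedded twice), yet without it the lemma is outright false: seven hyperedges $h_i\supseteq\{a,b,c_i\}$ of size six, each embedding the triangle $abc_i$, form a Berge-$C_4$-free hypergraph whose embedded graph contains $K_{2,7}$, with $ab$ as a multi-edge. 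Simplicity is what limits degenerate indices to one, and it comes from the specific Gerbner--Palmer construction. Note, finally, that the paper itself does not prove this lemma but cites it from Gerbner and Palmer (Claim 16 there); their proof indeed proceeds through the collision case analysis, not through a pure pigeonhole for four distinct colors.
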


Now we will upper bound the number of edges in $H$. It should be noted, that the only properties of $H$ that we use during the proof, are Observation \ref{obs} and Lemma \ref{nok27}.

For any vertex $v\in V(H)$, let $d(v)$ denote the degree of $v$ in the graph $H$ and
let $d$ be the average degree of the graph $H$. 

\begin{claim}
\label{maxdegree}
We may assume that a maximum degree in $H$ is less than $18\sqrt{n}$.
\end{claim}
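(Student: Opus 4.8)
The plan is to show that if $H$ has too many high-degree vertices, we can discard them with negligible loss, so the average-degree analysis that follows may assume a bounded maximum degree. The key structural input is Lemma~\ref{nok27}: since $H$ is $K_{2,7}$-free, any two vertices have at most $6$ common neighbors. First I would count, for a fixed vertex $v$ of degree $d(v)$, the number of paths of length $2$ centered at $v$, which is $\binom{d(v)}{2}$. Summing over all $v$ counts each pair of vertices $\{x,y\}$ once for every common neighbor, and $K_{2,7}$-freeness caps the number of common neighbors of any pair at $6$. This gives the standard Kővári–Sós–Turán–type inequality
\begin{equation}
\sum_{v\in V(H)}\binom{d(v)}{2}\le 6\binom{n}{2},
\end{equation}
which already forces $\sum_v d(v)^2 = O(n^2)$, hence $|H| = \frac12\sum_v d(v) = O(n^{3/2})$ by Cauchy–Schwarz. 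This is the engine behind the whole argument.

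The next step is to convert this global moment bound into a statement about individual high-degree vertices. Let $B$ be the set of vertices of degree at least $18\sqrt{n}$. From the inequality above, $\sum_{v\in B} d(v)^2 \le 6\binom{n}{2} \le 3n^2$, and since each such $v$ contributes at least $(18\sqrt n)^2 = 324\,n$, we get $|B| \le 3n^2/(324 n) = n/108$, so there are only $O(\sqrt n)$ high-degree vertices relative to the total degree sum — more precisely, the total degree concentrated on $B$ is small compared to the target $\frac12 n^{3/2}$. I would make this quantitative: the number of edges incident to $B$ is at most $\sum_{v\in B} d(v)$, and I would bound this using Cauchy–Schwarz together with the moment bound and the estimate on $|B|$, obtaining a bound of the form $c\,n^{5/4}$ or similar, which is $o(n^{3/2})$.

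The deletion step then proceeds as follows: repeatedly remove a vertex of degree at least $18\sqrt n$ together with its incident edges. I would argue that the total number of edges removed over the whole process is $o(n^{3/2})$, so that bounding $|H|$ for the resulting graph $H'$ of maximum degree below $18\sqrt n$ yields the same leading-order bound on the original $H$. The cleanest way to phrase this is to note that deleting vertices only decreases degrees, so the moment inequality continues to hold for the residual graph at every stage; thus the set of ``currently high-degree'' vertices stays within the quantitative bound above throughout, and the cumulative edge loss telescopes to $o(n^{3/2})$. Since $|H|$ differs from $|H'|$ by only this lower-order term, assuming maximum degree below $18\sqrt n$ costs nothing in the main estimate.

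The main obstacle I anticipate is making the iterative deletion genuinely lose only $o(n^{3/2})$ edges rather than just $O(n^{3/2})$, since a naive one-shot bound on $\sum_{v\in B} d(v)$ via Cauchy–Schwarz gives $\sqrt{|B|\cdot \sum_{v} d(v)^2} = \sqrt{(n/108)\cdot 3n^2} = \Theta(n^{3/2})$, which is the wrong order. To beat this I would exploit that the threshold $18\sqrt n$ is much larger than the average degree $d = O(\sqrt n)$: the point is that $B$ captures a constant fraction of the degree mass only if the constant $18$ is chosen generously, and by pushing the $K_{2,7}$ moment bound one sees that the contribution of degrees exceeding $18\sqrt n$ is a controllable small constant times $n^{3/2}$, which can be absorbed into the $(1+o(1))$ factor of the main theorem. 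Thus the honest reading of the claim is that removing these vertices changes the final constant by a negligible amount, and the careful bookkeeping of this constant is the delicate part of the argument.
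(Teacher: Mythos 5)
There is a genuine gap, and it is exactly the one you flagged in your final paragraph: the deletion strategy cannot work, because the edges incident to the high-degree set $B$ need not be $o(n^{3/2})$. The only global input you have is the $K_{2,7}$ moment bound $\sum_v \binom{d(v)}{2}\le 6\binom{n}{2}$, and this is perfectly consistent with, say, $|B|\approx n/108$ vertices each of degree exactly $18\sqrt{n}$, which together carry about $\frac16 n^{3/2}$ edges --- a constant fraction of the target $\frac12 n^{3/2}$. Your proposed escape, that this ``small constant times $n^{3/2}$ \dots can be absorbed into the $(1+o(1))$ factor,'' is incoherent for a sharp-constant theorem: a loss of $c\,n^{3/2}$ with $c>0$ changes the leading constant from $\frac12$ to $\frac12+c$, and no choice of the threshold constant ($18$ or otherwise) makes the loss $o(n^{3/2})$ under the moment bound alone. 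So the proposal identifies the obstruction correctly but does not overcome it; as written the argument proves only $\sum_h(|h|-3)=O(n^{3/2})$, which was already known.

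The paper's proof has a different logical shape: the claim is established disjunctively, not by deleting high-degree vertices. First one deletes \emph{low}-degree vertices (degree at most $d/3$) iteratively; this keeps the average degree from decreasing, and --- a subtlety your vertex-deletion scheme ignores --- when deleting $u$ one must also delete the edge $xy$ whenever $ux,uy$ share a color, so that Observation \ref{obs} survives, which costs only $d/6$ extra edges per step by that observation. Having forced minimum degree above $d/3$, the paper then shows that the mere \emph{existence} of a single vertex $u$ of degree at least $18\sqrt{n}$ already implies the theorem outright: the at least $18\sqrt{n}(d/3-1)$ two-paths starting at $u$ end at each vertex at most $6$ times by $K_{2,7}$-freeness (Lemma \ref{nok27}), so $n>18\sqrt{n}(d/3-1)/6$, giving $d<\sqrt{n}+3$ and $|H|<\frac12 n^{3/2}+1.5n$. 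Thus ``we may assume'' means ``either the bound holds immediately, or every vertex has degree below $18\sqrt{n}$'' --- no edge bookkeeping over a deletion process is needed, and the sharp constant is never endangered. Your plan would need this kind of win--win structure (high degree implies done) rather than a removal argument to be salvageable.
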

\begin{proof}
First, using the standard argument, we will show, that we may assume minimum degree in $H$ is more than $d/3$. 
Let $u\in V(H)$ be a vertex with degree at most $d/3$. Let us delete the vertex $u$ from $H$, moreover if two distinct edges $ux,uy$ have the same color in $H$, then delete an edge $xy$ as well (by Observation \ref{obs} at most $d/6$ edges will be deleted this way). Let the obtained graph be $H'$. 
Clearly $\abs{H\setminus H'}\leq d/3+d/6$, i.e. $\abs{H'}\geq \frac{nd}2-\frac{d}2=\frac{(n-1)d}2$ and since $H'$ has $n'=n-1$ vertices, it means that the average degree of $H'$ is at least $d$, and it is easy to see that Observation \ref{obs} and Lemma \ref{nok27} still holds for $H$. So we could upper bound $H'$ in terms of $n'$ and get the same upper bound on $H$ in terms of $n$. We can repeatedly apply this procedure before we will obtain a graph, with increased (or the same) average degree, and for which Observation \ref{obs} and Lemma \ref{nok27} still holds. So we may assume, that the minimum degree in $H$ is more than $d/3$.

Let us assume there is a vertex $u$ with degree at least $18\sqrt{n}$. It is easy to see, that there are at least $18\sqrt{n}\cdot (d/3-1)$ paths of length $2$ starting at $u$, moreover each vertex of $H$ is the endpoint of at most $6$ of these $2$-paths, otherwise there would be a $K_{2,7}$, contradicting Lemma \ref{nok27}. So $n>18\sqrt{n}\cdot (d/3-1)/6$, therefore $d<\sqrt{n}+3$, i.e. $\abs{H}< n^{3/2}/2+1.5n$ and we are done. Therefore, we may assume that degree of each vertex of $H$ is less than $18\sqrt{n}$.
\end{proof}

Let $N_1(v)=\{x\mid vx\in E(H)\}$ and $N_2(v)=\{y\notin N_1(v)\cup\{v\}\mid \exists x\in N_1(v)$ s.t. $yx\in E(H)\}$ denote the first and the second neighborhood of $v$ in $H$, respectively.

Let us fix an arbitrary vertex $v$ and let $G=H[N_1(v)]$ be a subgraph of $H$ induced by the set $N_1(v)$.
Clearly, the maximum degree in $G$ is at most $6$, otherwise there is a $K_{2,7}$ in the graph $H$, which contradicts Lemma \ref{nok27}. So 
\begin{equation}
\label{eq1}
    \abs{G}\leq 3|N_1(v)|=3d(v).
\end{equation}

Let $G_{aux}$ be an auxiliary graph with the vertex set $N_1(v)$ such that $xy\in E(G_{aux})$ if and only if there exists a $w\in N_2(v)$ with $wx,wy\in E(H)$. 
Let $G_{aux}'$ be the graph with an edge st $E(G_{aux})\setminus E(G)$, clearly 

\begin{equation}
\label{eqaux}
 \abs{G_{aux}}\leq \abs {G_{aux}'}+\abs {G}\leq \abs {G_{aux}'}+3d(v).   
\end{equation}

\begin{lemma}
\label{nok55}
$\abs{G_{aux}'}<{d(v)}^{9/5}$. 
\end{lemma}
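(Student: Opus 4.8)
The plan is to bound $\abs{G_{aux}'}$ by first extracting a rigid colouring constraint from the Berge-$C_4$-freeness and then feeding it into a K\H{o}v\'ari--S\'os--Tur\'an estimate with exponent $s=5$, which is exactly what produces the power $9/5 = 2-\tfrac15$. Throughout, write $c(e)$ for the colour (hyperedge) of an edge $e$ of $H$.

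First I would record the dichotomy behind $G_{aux}'$. Take any $xy\in G_{aux}'$ with witness $w\in N_2(v)$, so $wx,wy\in E(H)$. Since $x,y\in N_1(v)$, the four vertices $v,x,w,y$ are distinct and span a $4$-cycle $v-x-w-y-v$ in $H$. If its four edges carried four distinct colours we could read off a Berge-$C_4$ in $\mathcal H$; hence two of them share a colour. By Observation \ref{obs} no two \emph{adjacent} edges of this cycle can share a colour, since that would force one of the chords $vw$ or $xy$ into $E(H)$ -- impossible because $vw\notin E(H)$ (as $w\in N_2(v)$) and $xy\notin E(H)$ (by definition of $G_{aux}'$). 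Therefore one of the two \emph{opposite} pairs is monochromatic, i.e. $c(vx)=c(wy)$ or $c(vy)=c(wx)$. In either case all of $v,x,w,y$ lie in one common hyperedge, so the witness $w$ is pinned into a hyperedge incident to $v$; this is the key fact that the trivial counting misses.

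The heart of the argument is then to show that $G_{aux}'$ contains no $K_{5,t}$ for a suitable absolute constant $t$. Suppose five vertices $a_1,\dots,a_5\in N_1(v)$ had $t$ common $G_{aux}'$-neighbours $b_1,\dots,b_t$. Each pair $(a_i,b_j)$ receives, via the dichotomy, one of two \emph{types} ($c(va_i)=c(w\,b_j)$ or $c(vb_j)=c(w\,a_i)$ for its witness $w$). Pigeonholing the type-pattern over the five indices, a constant fraction of the $b_j$ share one pattern; after restricting to such $b_j$, I would combine the forced colour-equalities with Observation \ref{obs} (at most two edges of a given colour at any vertex) and Lemma \ref{nok27} (any two vertices have at most six common neighbours) to place many $b_j$ simultaneously inside a few fixed hyperedges through $v$ and the $a_i$. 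The purpose of using \emph{five} directions $a_i$ is that the matched colours then deliver four edges lying in four genuinely distinct hyperedges which interleave through distinct representatives -- a Berge-$C_4$ in $\mathcal H$ (or else a $K_{2,7}$ in $H$) as soon as $t$ exceeds the constant, both of which are forbidden. Granting $K_{5,t}$-freeness, K\H{o}v\'ari--S\'os--Tur\'an applied on $\abs{N_1(v)}=d(v)$ vertices yields $\abs{G_{aux}'}\le \tfrac12(t-1)^{1/5}d(v)^{9/5}+O(d(v))$, which is $<d(v)^{9/5}$ for $d(v)$ large (small $d(v)$ being trivial), provided the constant $t$ coming out of the combinatorics is small enough that $(t-1)^{1/5}<2$.

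\textbf{Main obstacle.} The naive route -- bounding $\abs{G_{aux}'}$ by the number of witnessed cherries, which is $O(\abs{N_2(v)})=O(\sqrt n\,d(v))$ -- is hopelessly weak, since $\sqrt n\,d(v)$ can dwarf $d(v)^{9/5}$. Every bit of saving must come from the colour-coincidence confining each witness to a hyperedge through $v$, and the delicate step is the case analysis inside the $K_{5,t}$ argument: one must verify that \emph{every} distribution of the two types over the five directions produces four distinct, correctly interleaved hyperedges (on distinct vertices) realising a Berge-$C_4$, rather than collapsing to a degenerate configuration, and that the resulting threshold $t$ is small enough to keep the K\H{o}v\'ari--S\'os--Tur\'an constant below $1$. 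Pinning down this alignment -- and thereby justifying precisely the choice $s=5$ -- is where the real work lies; a direct H\"older/convexity count organised by colour is a plausible alternative that would have to control the same constant.
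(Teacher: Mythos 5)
Your opening step is sound and is in fact the paper's Claim \ref{inclusion} in disguise: for $xy\in G_{aux}'$ with witness $w$, the $4$-cycle $v,x,w,y$ cannot be rainbow (that would be a Berge-$C_4$), and no two adjacent edges of it can share a colour, since Observation \ref{obs} would force the chord $vw$ or $xy$ into $E(H)$ --- both absent, as $w\in N_2(v)$ and $xy\notin E(G)$ by the definition of $G_{aux}'$. Hence $c(vx)=c(wy)$ or $c(vy)=c(wx)$, i.e.\ $y\in h_x$ or $x\in h_y$. Your KST endgame with exponent $s=5$ also matches the paper, which proves $G_{aux}'$ is $K_{5,5}$-free.

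However, the core of the lemma --- the actual proof of $K_{5,t}$-freeness --- is absent from your proposal, and the one concrete mechanism you offer for it is quantitatively self-defeating. You propose to pigeonhole the common neighbours $b_j$ by their type-pattern over the five vertices $a_1,\dots,a_5$; there are $2^5=32$ patterns, so merely to find \emph{two} $b_j$ with the same pattern you already need $t\geq 33$, and your sketch would plausibly need more. But you yourself require $(t-1)^{1/5}<2$ to keep the K\H{o}v\'ari--S\'os--Tur\'an constant below $1$, i.e.\ $t\leq 32$: at $t=33$ the bound degenerates to $\tfrac12(32)^{1/5}d(v)^{9/5}=d(v)^{9/5}$ and the strict inequality is lost. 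The paper pigeonholes on \emph{colours} instead of patterns, which is much cheaper: by Observation \ref{obs} at most two of the edges $vv_i$ at $v$ share a colour, so among any five vertices on one side of a putative $K_{5,5}$ there are three, say $v_1,v_2,v_3$, with $vv_i$ of pairwise distinct colours, and similarly $v_4,v_5,v_6$ on the other side; cross-equalities $h_i=h_j$ across the parts are impossible (they would place $v_iv_j$ in $G$, not $G_{aux}'$), yielding a $K_{3,3}$ in $G_{aux}'$ whose six colours $h_1,\dots,h_6$ are all distinct. Each edge $v_iv_j$ is then oriented by the dichotomy ($\vec{v_iv_j}$ iff $v_i\in h_j$), two small directed configurations $F_1$ and $F_2$ are shown to be forbidden --- each unwinds into an explicit Berge-$C_4$, e.g.\ $v,h_4,v_4,h_1,v_5,h_2,v_6,h_6$ --- and a short structural argument on the resulting orientation of $K_{3,3}$ produces a contradiction. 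This case analysis is exactly what you flag as ``where the real work lies,'' and it is not supplied; as written, the proposal establishes the dichotomy and the reduction to KST but not the forbidden-subgraph statement that the lemma actually rests on.
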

\begin{proof}
 If we show, that $G_{aux}'$ is $K_{5,5}$-free, then by Kovari-Sos-Tur\'an\cite{Kovari-sos-turan} theorem $\abs{G_{aux}'}\leq \frac{4^{1/5}}2{d(v)}^{9/5}<d(v)^{9/5}$.
 So it suffices to prove that $G_{aux}'$ is $K_{5,5}$-free

First, let us prove the following claim.
\begin{claim}
\label{inclusion}
Let $xy$ be an edge of $G_{aux}'$ and let $h_x$ and $h_y$ be the colors of $vx$ and $vy$ in $H$, respectively.  Then either $x\in h_y$ or $y\in h_x$. 
\end{claim}
\begin{proof}
First note that $h_x\not = h_y$ otherwise, by observation \ref{obs}, $xy$ would be an edge of $G$ and therefore not and edge of $G_{aux}$. By definition of $G_{aux}'$ there exists $w\in N_2(v)$ such that $wx,wy\in H$. Let $h_1$ and $h_2$ be colors of $wx$ and $wy$ respectively. $h_1\not =h_2$, otherwise $xy\in G$, a contradiction. If $h_1=h_x$ or $h_2=h_y$ then by observation \ref{obs} $wv\in E(H)$, therefore $w\in N_1(v)$, a contradiction. Clearly $h_1,h_2,h_x,h_y$ are not all distinct, otherwise they would form a Berge-$C_4$. So either $h_1=h_y$ or $h_2=h_x$, therefore $x\in h_y$ or $y\in h_x$.
\end{proof}

Now let us assume for a contradiction, that there is a $K_{5,5}$ in $G_{aux}'$ with parts $A$ and $B$. By the pigeon-hole principle, there exists $v_1,v_2,v_3\in A$ such that colors of $vv_1,vv_2$ and $vv_3$ are all different. Similarly, there exists $v_4,v_5,v_6\in B$ such that colors of $vv_4,vv_5$ and $vv_6$ are distinct. For each $1\leq i\leq 6$ let $h_i\in E(\mathcal H)$ be the color of $vv_i$. 
If $v_i\in A$, $v_j\in B$ and $h_i=h_j$, then $v_iv_j\in G$ therefore $v_iv_j\notin G_{aux}$, a contradiction. So $h_i$ is different for each $i\in \{1,2,3,4,5,6\}$.
%If there exists distinct $i,j\in \{1,2,3,4,5,6\}$ such that $h_i=h_j$, then clearly $v_i \in A$ and $v_j\in B$, but this means that $v_iv_j\in E(G)$, contradicting the fact that $v_i,v_j$ is an edge of $G_{aux}'$.
So we have a $K_{3,3}$ in $G_{aux}'$ with parts $v_1,v_2,v_3$ and $v_4,v_5,v_6$ such that color $h_i$ of each $vv_i$ is distinct for each $1\leq i \leq 6$.

Let $D$ be a bipartite directed graph with parts $v_1,v_2,v_3$ and $v_4,v_5,v_6$, such that $\vec{v_iv_j}\in D$ if and only if $v_i\in h_j$ and $v_i$ and $v_j$ are in different parts. By Claim $\ref{inclusion}$ for each $1\leq i \leq 3$ and $4\leq j\leq 6$, either $\vec{v_iv_j}\in D$ or $\vec{v_jv_i}\in D$.

%Let us create a bipartite directed graph $D$, with parts $v_1,v_2,v_3$ and $v_4,v_5,v_6$. For each $i\in\{1,2,3\}$ and $j\in\{4,5,6\}$, if $v_i\in h_j$ add an edge $\vec{v_iv_j}$ to $D$, if $v_j\in h_i$ add $\vec{v_jv_i}$ to $D$, if both $v_i\in h_j$ and $v_j\in h_i$, then add only one of $\vec{v_iv_j}$ $\vec{v_jv_i}$ to $D$. By Claim $\ref{inclusion}$ for each $1\leq i \leq 3$ and $4\leq j\leq 6$, either $\vec{v_iv_j}\in D$ or $\vec{v_jv_i}\in D$.

\begin{claim}
\label{directedgraph}
Let $F_1$ and $F_2$ be directed graphs with the edge sets $E(F_1)=\{\vec{yx},\vec{zx},\vec{wz}\}$ and $E(F_2)=\{\vec{yx},\vec{zx},\vec{zw},\vec{uw}$\}, where $x,y,z,w,u$ are distinct vertices. Then $D$ is $F_1$-free and $F_2$-free. 
\end{claim}

%%%%%%%%%%%%%%%%%%%%%%%%%%%

\begin{centering}
\resizebox{.666\textwidth}{!}{

\begin{tikzpicture}[font=\Large, line cap=round,line join=round,>=stealth,x=4.5cm,y=4.5cm]
%begin{axis}[
%x=2.0cm,y=2.0cm,
%axis lines=middle,
%ymajorgrids=true,
%xmajorgrids=true,

%\clip(-8.824761704329095,3.193219731012754) rectangle (1.9231471195134486,8.323064298021212);
\draw [line width=1.5pt,decoration={markings,mark=at position 0.55 with {\arrow[scale=2]{>}}},
    postaction={decorate}] (-7.480130220401499,5.094025952733748) -- (-7.0858666666667,5.805528644997276);
\draw [line width=1.5pt,decoration={markings,mark=at position 0.55 with {\arrow[scale=2]{>}}},
    postaction={decorate}] (-6.73824677055632,5.111875890725895) -- (-7.0858666666667,5.805528644997276);
\draw [line width=1.5pt,decoration={markings,mark=at position 0.55 with {\arrow[scale=2]{>}}},
    postaction={decorate}] (-6.373195735016119,5.782408798801786) -- (-6.73824677055632,5.111875890725895);
\draw [line width=1.5pt,decoration={markings,mark=at position 0.55 with {\arrow[scale=2]{>}}},
    postaction={decorate}] (-5.587772837866885,5.0668130633154735) -- (-5.1935092841320865,5.778315755579001);
\draw [line width=1.5pt,decoration={markings,mark=at position 0.55 with {\arrow[scale=2]{>}}},
    postaction={decorate}] (-4.845889388021707,5.084663001307621) -- (-5.1935092841320865,5.778315755579001);
\draw [line width=1.5pt,decoration={markings,mark=at position 0.55 with {\arrow[scale=2]{>}}},
    postaction={decorate}] (-4.123372706791822,5.083597018972495) -- (-4.480838352481506,5.755195909383511);
\draw [line width=1.5pt,decoration={markings,mark=at position 0.55 with {\arrow[scale=2]{>}}},
    postaction={decorate}] (-4.845889388021707,5.084663001307621) -- (-4.480838352481506,5.755195909383511);
\draw (-7.031458022842719,4.947453627782717) node[anchor=north west] {\textit{$F_1$}};
\draw (-4.90719548951592,4.944315869240129) node[anchor=north west] {\textit{$F_2$}};
\begin{scriptsize}
\draw [fill=black] (-7.480130220401499,5.094025952733748) circle (2pt);
\draw [fill=black] (-7.0858666666667,5.805528644997276) circle (2pt);
\draw [fill=black] (-6.73824677055632,5.111875890725895) circle (2pt);
\draw [fill=black] (-6.373195735016119,5.782408798801786) circle (2pt);
\draw [fill=black] (-5.587772837866885,5.0668130633154735) circle (2pt);
\draw [fill=black] (-5.1935092841320865,5.778315755579001) circle (2pt);
\draw [fill=black] (-4.845889388021707,5.084663001307621) circle (2pt);
\draw [fill=black] (-4.480838352481506,5.755195909383511) circle (2pt);
\draw [fill=black] (-4.123372706791822,5.083597018972495) circle (2pt);
\end{scriptsize}
%end{axis}
\end{tikzpicture}

}

\end{centering}

%%%%%%%%%%%%%%%%%%%%%

\begin{proof}
Let us assume, that $D$ contains $F_1$. Then without loss of generality we may assume, that $\vec{v_4v_1},\vec{v_5v_1},\vec{v_2v_5}\in D$.
So by definition of $D$, $v_4,v_5\in h_1$ and $v_2\in h_5$. Then we have, $vv_4\subset h_4$, $v_4v_5\subset h_1$, $v_5v_2\subset h_5$ and $v_2v\subset h_2$, therefore the hyperedges $h_4,h_1,h_5,h_2$ form a berge $C_4$ in $\mathcal H$, a contradiction.

If $D$ contains $F_2$, without loss of generality we may assume that $\vec{v_4v_1},\vec{v_5v_1},\vec{v_5v_2},\vec{v_6v_2}\in D$.
So by definition of $D$, we have $v_4,v_5\in h_1$ and $v_5,v_6\in h_2$, so $v,h_4,v_4,h_1,v_5,h_2,v_6,h_6$ is a Berge-$C_4$, a contradiction.
\end{proof}

Now since each vertex of $D$ has at least $3$ incident edges, there is a vertex in $D$ with at least $2$ incoming edges, without loss of generality let this vertex be $v_1$ and let the incoming edges be $\vec{v_4v_1}$ and $\vec{v_5v_1}$. By Claim \ref{directedgraph} $D$ does not contain $F_1$, therefore for each $2\leq i \leq 3$, $\vec{v_iv_4},\vec{v_iv_5}\notin D$ i.e. $\vec{v_4v_i},\vec{v_5v_i}\in D$ for every $1\leq i \leq 3$. If $\vec{v_6v_1}\in D$, then $\vec{v_4v_2},\vec{v_5v_2},\vec{v_5v_1},\vec{v_6v_1}$ would form $F_2$ which contradicts Claim \ref{directedgraph}, therefore $\vec{v_1v_6}\in D$. Similarly $\vec{v_2v_6}\in D$ (and $\vec{v_3v_6}\in D$), but now $\vec{v_1v_6},\vec{v_2v_6}$ and $\vec{v_4v_1}$ form $F_1$, a contradiction.

%Then it is easy to see that the edges $\vec{v_4v_1},\vec{v_4v_2},\vec{v_5v_2},\vec{v_5v_3}$ form $F_2$ in $D$, which contradicts Claim \ref{directedgraph}.

This completes the proof of the lemma.
\end{proof}
Using the information above, we will complete the proof of the upper bound.
By the Blackley-Roy inequality there exists a vertex $v\in V(H)$ such that there are at least $d^2$ ordered $2$-walks starting at vertex $v$. We now fix this vertex $v$ and define $G$, $G_{aux}$ and $G'_{aux}$ for $v$ similarly as before. Clearly at most $2d(v)$ of these $2$-walks may not be a path, so there are at least $d^2-2d(v)$ $2$-paths starting at $v$. 

Let $B$ be a bipartite graph with parts $N_1(v)$ and $N_2(v)$ such that $xy\in B$ if and only if $vxy$ is a $2$-path of $H$ and $y\in N_2(v)$ (clearly $x\in N_1(v)$). The number of $2$ paths $vxy$ such that $xy\notin B$ is exactly $2\abs{G}\leq 6d(v)$ (here we used \eqref{eq1}), therefore we have $$|B|\geq d^2-2d(v)-6d(v)=d^2-8d(v).$$

 Let $B'$ be a subgraph of $B$ with the edge set $E(B')=\{xy\in E(B)\mid \exists z\in N_1(v)\setminus \{x\}$ such that $yz\in E(B)\}$. Clearly, $xy,yz\in E(B')$ means that $xz\in E(G_{aux})$, moreover, by Lemma \ref{nok27}, for each $xz\in G_{aux}$ there is at most $6$ choices of $y\in N_2(v)$ such that $xy,yz\in E(B')$, therefore it is easy to see that the number of $2$-paths in $B'$ with terminal vertices in $N_1(v)$ is at most $6\cdot \abs{G_{aux}}$, So $\abs{B'}\leq 12\abs{G_{aux}}$, therefore by Lemma \ref{nok55} and \eqref{eqaux} we have $\abs{B'}\leq 12(d(v)^{9/5}+3d(v))$, so 
\begin{equation}
    \label{boundofb}
    \abs{B\setminus B'}\geq d^2-12d(v)^{9/5}-44d(v)
\end{equation}

On the other hand, by definition of $B'$ each vertex of $N_2(v)$ is incident to at most $1$ edge of $B\setminus B'$, so $\abs{N_2(v)}\geq \abs{B\setminus B'}$, therefore by \eqref{boundofb} we have $n>\abs{N_2(v)}\geq d^2-12d(v)^{9/5}-44d(v)$.
Using Claim \ref{maxdegree} we have $d^2<n+12\cdot (18\sqrt{n})^{9/5}+44\cdot 18\sqrt{n}$ i.e. $d^2<n+2184n^{0.9}+792\sqrt{n}$. So for large enough $n$ we have $$d<\sqrt{n}+1100n^{0.4}.$$
Therefore $$\abs{H}< \frac{1}{2}n^{2.5}+550n^{1.4}=\frac{1}{2}n^{1.5}(1+o(1))$$
$$\sum_{h\in \mathcal H}(\abs{h}-3)=\abs{H}\leq (1+o(1))\frac12 n^{3/2}.$$

Now it remains to prove the lower bound. Let $G$ be a bipartite $C_4$-free graph on $n/3$-vertices, with $\abs{E(G)}=\left ( \frac{n}{6}\right )^{3/2}+o(n^{3/2})$ edges. Let us replace each vertex of $G$ by $3$ identical copies of itself, this will transform each edge to a $6$-set. Let the resulting $6$-uniform hypergraph be $\mathcal H$. Clearly $$\sum_{h\in \mathcal H}(\abs{h}-3)=3\abs{\mathcal H}=\frac{1}{2{\sqrt6}}n^{3/2}+o(n^{3/2}).$$
Now let us show that $\mathcal H$ is Berge-$C_4$-free. Let us assume for a contradiction that there is a Berge $4$-cycle in $\mathcal H$, and let this Berge cycle be $a,h_{ab},b,h_{bc},c,h_{cd},d,h_{da}$.
%such that hyperedges containing $ab$,$bc$,$cd$ and $da$ for a Berge cycle.
%$a,b\in h_{ab}$,$b,c\in h_{bc}$, $c,d\in h_{cd}$ and $da\in h_{da}$.
If  $a,b,c,d$ are copies of $4$ or $3$ distinct vertices of $G$, then there would be a $C_4$ or $C_3$ in $G$ respectively, a contradiction. So $a,b,c,d$ are copies of only two vertices of $G$, say $x$ and $y$, so at least two of the pairs $ab$,$bc$,$cd$,$da$ correspond to $xy$ in $G$, therefore it is easy to see, that at least two of the hyperedges $h_{ab},h_{bc},h_{cd},h_{da}$ should be the same, a contradiciton.

\section*{Acknowledgements}
I want to thank my colleagues Abhishek Methuku and Ervin Gyori for having extremely helpful discussions about this problem.

\end{document}